\newtheorem{theorem}{Theorem}
\newtheorem{definition}[theorem]{Definition}
\newtheorem{lemma}[theorem]{Lemma}
\newtheorem{proposition}[theorem]{Proposition}
\newenvironment{proof}[1][Proof]{\textbf{#1.} }{\ \rule{0.5em}{0.5em}}
\newcommand{\func}{\operatorname}
\newcommand{\limfunc}{\operatorname}
\begin{document}

\title{Modular class of even symplectic manifolds}
\author{J. Monterde and J. Vallejo \\
Departament de Geometria i Topologia.\\
Universitat de Val\`{e}ncia (Spain).\\
{\small{e-mail: \texttt{juan.l.monterde@uv.es, jose.a.vallejo@uv.es}}}
}
\maketitle

\begin{abstract}
We provide an intrinsic description of the notion of modular class for an
even symplectic manifold and study its properties in this coordinate free
setting.
\end{abstract}

\section{Introduction}

The definition of the modular vector field of a Poisson manifold $%
(M;\{\_,\_\})$, is as follows: given a volume element $\eta$ on $M$, the
modular vector field $Z^{M}$ maps each function $f\in C^{\infty}(M)$ into
the divergence with respect to $\eta$ of the hamiltonian vector field
associated to $f$, i.e, 
\begin{equation}
Z^{M}(f):=\func{div}^{\eta}(X_{f})=\func{div}^{\eta }(\{\func{d}f,\_\}).
\label{eq0}
\end{equation}

What is called the modular class of $(M;\{\_,\_\})$, is its class in the
Poisson-Lichnerowicz cohomology (\cite{Lic 77}). The concept of modular
vector field was introduced by Koszul in \cite{Kos 84}, in his study of the
cohomology of a Poisson manifold, and Weinstein (in \cite{Wei 97}) has used
it as a tool to understand the modular automorphisms of von Neumann
algebras, observing that these share with their semiclassical limits
(Poisson algebras) the property of having modular automorphisms groups. The
concept has also appeared in geometry in the classification of quadratic
Poisson structures (see \cite{Duf-Har 91}). The modular vector field and the
related notion of volume element, has also been used intensively by O. M.
Khudaverdian and others in the study of graded Poincar\'{e}-Cartan
invariants, the geometry of Batalin-Vilkovisky formalism, etc (see \cite{Khu
81}, \cite{Khu 91}, \cite{Khu 98}). So we feel that an intrinsic,
geometrical study of these structures deserves attention.

The notion of modular class only needs a Poisson structure to be defined,
but we will center our attention in the non degenerate case.

In the graded setting, when a graded Poisson manifold $((M,\wedge \mathcal{E}%
),[\![\_,\_]\!])$ is given (see Sections $2$ and $3$ for the definitions), a
fundamental distinction appears: even though an appropriate definition of
divergence can be given, the analog of the mapping (\ref{eq0}) does not give
a derivation on ${\wedge }\mathcal{E}$ when the Poisson bracket is odd with
respect to the $\mathbb{Z}$-grading, but a generator for the Poisson
bracket, in the sense of Gerstenhaber algebras (see \cite{Khu 91}, \cite
{Kos-SCh 99} or \cite{Kos-Mon 01}). On the other hand, when the bracket is
even with respect to the $\mathbb{Z}$-grading the same mapping does give a
derivation on ${\wedge }\mathcal{E}$. So it is in this case that it makes
sense to develop the notions of graded modular vector field and modular
class.

In the nongraded case, it is a well known fact that any symplectic manifold $%
(M,\omega )$ is unimodular, i.e, it gives the zero class. Now suppose $M$ is
the base manifold of a given graded Poisson manifold $(M,\wedge \mathcal{E})$
whose graded Poisson bracket $[\![\_,\_]\!]$ is nondegenerate and extends
the Poisson bracket in $M$ defined by $\omega $. It is also known that this
bracket has an associated volume form which, in local coordinates, is
expressed by the Berezinian of the bracket matrix (see \cite{Ber 87}); using
this volume form, it can be seen that the modular vector field is zero, so $%
(M,\wedge \mathcal{E})$ is unimodular.

Our purpose in this paper, is to give a geometrical, coordinate free setting
for these results.We define the notion of symplectic Berezinian volume
element in an intrinsic way, and study how it changes with the section of
the Berezinian sheaf chosen, along with its relation to the canonical
Berezinian. As an application, we give a graded formulation of the
continuity equation of fluid mechanics.

\section{Graded forms on $(M,\Gamma(\Lambda E))$}

For the generalities on graded manifolds, see \cite{Kos 77}, \cite{Lei 80}
or \cite{Ber 87}; our approach here follows \cite{Mon-San 97}. Let $M$ be an 
$m$-dimensional smooth manifold, and let $C_{M}^{\infty }$ be the sheaf of
smooth functions on $M$. Let $E\rightarrow M$ be a vector bundle of rank $n$%
, and let $\mathcal{E}=\Gamma (E)$ be its sheaf of smooth sections. Let $%
\wedge \mathcal{E}=\Gamma (\Lambda E)$ be the sheaf of smooth sections of
the exterior algebra bundle $\wedge E\rightarrow M$.

We refer to \cite{Kos 77} or to \cite{Mon-San 97} for definitions of graded
vector field, graded differential form, insertion operator, $\iota(D)$ ($D$
being a graded vector field), exterior differential, $d^{G}$, and Lie
operator $\mathcal{L}_{D}^{G}$.

Being a graded homomorphism of graded modules, a graded differential form
has a degree. Thus, we can define a $\mathbb{Z}\times\mathbb{Z}-$bigrading
on the module of graded differential forms and we will say that a graded
differential form $\lambda$ has bidegree $(p,k)\in\mathbb{Z}\times\mathbb{Z}$
if 
\begin{equation*}
\lambda:\mathrm{\limfunc{Der}}\wedge\mathcal{E}\times.\overset{p)}{.}.\times%
\mathrm{\limfunc{Der}}\wedge\mathcal{E}\longrightarrow \wedge\mathcal{E}
\end{equation*}
and if, for all $D_{1},...,D_{p}\in$ $\mathrm{\limfunc{Der}}\wedge\mathcal{E}
$, 
\begin{equation*}
\left| \left\langle D_{1},...,D_{p};\lambda\right\rangle \right|
=\sum\limits_{i=1}^{p}\left| D_{i}\right| +k\text{.}
\end{equation*}

Using this bigrading, any graded $p-$differential form $\lambda$ can be
decomposed as a sum $\lambda=\lambda_{(0)}+...+\lambda_{(n)}$, where $%
\lambda_{(i)}$ is a homogeneous graded form of bidegree $(p,i)$.

A fundamental result is the following corollary to a theorem by Kostant (4.7
in \cite{Kos 77}).

\begin{proposition}
\label{cor-Ko} Every $d^{G}$-closed graded form of bidegree $(p,k)$ with $%
k>0 $ is exact.
\end{proposition}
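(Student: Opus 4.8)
The plan is to exhibit an explicit homotopy operator built from the canonical derivation that measures the internal $\mathbb{Z}$-degree, and then to apply Cartan's magic formula. Since $\wedge\mathcal{E}=\Gamma(\Lambda E)=\bigoplus_{i=0}^{n}\Gamma(\Lambda^{i}E)$ carries an intrinsic grading, there is a globally defined even graded vector field $N$ acting as multiplication by $i$ on $\Gamma(\Lambda^{i}E)$; in local graded coordinates $(x^{j},\xi^{a})$ it is $N=\sum_{a}\xi^{a}\,\partial_{\xi^{a}}$. As an (even, degree-preserving) derivation it has an insertion operator $\iota(N)$ and a Lie operator $\mathcal{L}_{N}^{G}$ at our disposal.

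First I would record the bidegrees. Because $N$ is even and preserves the internal degree, $\iota(N)$ sends a form of bidegree $(p,k)$ to one of bidegree $(p-1,k)$, while $d^{G}$ raises $(p,k)$ to $(p+1,k)$; in particular $k$ is untouched throughout, which is what lets the statement be treated one internal degree at a time. Next I would prove the eigenvalue identity
\[
\mathcal{L}_{N}^{G}\lambda=k\,\lambda\qquad\text{for every }\lambda\text{ of bidegree }(p,k).
\]
This is checked on generators and extended by the derivation property of $\mathcal{L}_{N}^{G}$: on a coefficient $f\in\Gamma(\Lambda^{k}E)$ one has $\mathcal{L}_{N}^{G}f=N(f)=kf$; on basic one-forms, $\mathcal{L}_{N}^{G}(dx^{j})=d^{G}(Nx^{j})=0$ and $\mathcal{L}_{N}^{G}(d\xi^{a})=d^{G}(N\xi^{a})=d^{G}\xi^{a}$, so that each $d\xi^{a}$ contributes weight $1$. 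As the weights add over wedge products, the total weight of a bidegree $(p,k)$ monomial is exactly its internal degree $k$, matching the bookkeeping behind the bigrading.

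The conclusion is then immediate. Cartan's formula for the even derivation $N$ gives $\mathcal{L}_{N}^{G}=\iota(N)\circ d^{G}+d^{G}\circ\iota(N)$, so if $\lambda$ has bidegree $(p,k)$ and $d^{G}\lambda=0$ then
\[
k\,\lambda=\mathcal{L}_{N}^{G}\lambda=d^{G}\bigl(\iota(N)\lambda\bigr),
\]
whence $\lambda=d^{G}\bigl(\tfrac{1}{k}\,\iota(N)\lambda\bigr)$ is exact, the hypothesis $k>0$ being used precisely to divide by $k$. Observe that $\tfrac{1}{k}\,\iota(N)\lambda$ is globally defined because $N$ is, so the argument yields global exactness, not merely a local Poincar\'e lemma.

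I expect the only delicate points to be the sign bookkeeping in the graded Cartan formula and the verification that $\mathcal{L}_{N}^{G}$ acts diagonally with eigenvalue equal to the internal degree; the global existence of $N$ is automatic from the canonical splitting $\Lambda E=\bigoplus_{i}\Lambda^{i}E$ and requires no patching. This homotopy is exactly the mechanism underlying Kostant's Theorem 4.7, here specialized to a fixed internal degree $k>0$.
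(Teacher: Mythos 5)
Your proof is correct, but it takes a genuinely different route from the paper, which offers no argument of its own: it simply records the statement as a corollary of Kostant's Theorem 4.7 in \cite{Kos 77}, the graded de Rham theorem identifying the $d^{G}$-cohomology of $(M,\wedge\mathcal{E})$ with the de Rham cohomology of the body $M$, so that all cohomology is forced into internal degree $k=0$. Your explicit homotopy replaces that citation entirely: the number operator $N$, acting as multiplication by $i$ on $\Gamma(\Lambda^{i}E)$, is a globally defined even derivation of $\mathbb{Z}$-degree zero precisely because the structure sheaf comes with the splitting $\Lambda E=\bigoplus_{i}\Lambda^{i}E$; consequently $\iota(N)$ preserves the internal degree $k$, the graded Cartan formula $\mathcal{L}_{N}^{G}=\iota(N)\circ d^{G}+d^{G}\circ\iota(N)$ holds with no extra signs since $N$ is even, and your verification that $\mathcal{L}_{N}^{G}$ acts as multiplication by $k$ on bidegree $(p,k)$ (checked on functions and on differentials, then extended by the derivation property) is sound. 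What your route buys is an explicit, global primitive $\frac{1}{k}\,\iota(N)\lambda$ and independence from the full strength of Kostant's theorem; what the paper's citation buys is brevity plus the information at $k=0$, about which your homotopy says nothing (there $\mathcal{L}_{N}^{G}$ vanishes, consistent with the fact that closed $(p,0)$-forms, e.g.\ pullbacks of closed non-exact forms on $M$, need not be exact). One cosmetic caveat: the coordinate expression $N=\sum_{a}\xi^{a}\partial_{\xi^{a}}$ is valid only in charts adapted to the splitting, but since your actual definition of $N$ is the invariant one, nothing is lost.
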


Use will be made of the fact that the space of graded vector fields, $%
\mathrm{\limfunc{Der}}\wedge\mathcal{E}$, is a locally-free sheaf of ${\wedge%
}\mathcal{E}$-modules \cite{Kos 77}. See \cite{Mon 92} and \cite{Rot 90})
for an analysis ot its structure: Let $\mathcal{E}^{\ast}$ be the sheaf of
sections of the dual bundle $E^{\ast}\rightarrow M$. There is a monomorphism 
\begin{equation*}
i\colon\Gamma({\wedge}\mathcal{E})\otimes\mathcal{E}^{\ast}\hookrightarrow 
\mathrm{\limfunc{Der}}\wedge\mathcal{E}
\end{equation*}
On the other hand, let $\mathcal{X}(M)=\limfunc{Der}C_{M}^{\infty}$ be the
sheaf of smooth vector fields on $M$. A connection $\nabla$ on ${\wedge }%
\mathcal{E}$ gives, by definition, a morphism 
\begin{align*}
\Gamma({\wedge}\mathcal{E})\otimes\mathcal{X}(M) & \rightarrow \mathrm{%
\limfunc{Der}}\wedge\mathcal{E} \\
\alpha\otimes X & \mapsto\alpha\nabla_{X}.
\end{align*}

\section{Divergence operators and modular graded vector fields}

By definition, a \textit{divergence operator} on ${\wedge}\mathcal{E}$ is an
even linear map, $\func{div}:\mathrm{\limfunc{Der}}\wedge\mathcal{E}%
\rightarrow{\wedge}\mathcal{E}$, such that 
\begin{equation}
\func{div}(sD)=s\ \func{div}(D)+(-1)^{|s||D|}D(s)\ ,  \label{gooddiv}
\end{equation}
for any $D\in\mathrm{\limfunc{Der}}\wedge\mathcal{E}$ and any $s\in{\wedge}%
\mathcal{E}$.

The \textit{modular vector field} $Z^{M}$, associated to a divergence
operator $\limfunc{div}$ and a graded Poisson bracket $[\![\_,\_]\!]$ on $%
\wedge\mathcal{E}$, is the even graded vector field defined as 
\begin{equation}
s\in{\wedge}\mathcal{E}\mapsto D_{s}=[\![s,\_]\!]\in \mathrm{\limfunc{Der}}%
\wedge\mathcal{E}\mapsto\func{div}(D_{s})\in\wedge\mathcal{E}  \label{eq3.3}
\end{equation}

It is easy to check that when the even Poisson bracket is the Poisson
bracket associated to an even symplectic form, $\Theta$, then $Z^{M}$ is a
locally hamiltonian graded vector field. From now on, we shall work
exclusively in this case, this is, with an even symplectic form on $(M,\wedge%
\mathcal{E})$ and its associated even Poisson bracket $[\![\ ,\
]\!]_{\Theta} $.

\begin{lemma}
\label{ocho} Let $D = \sum\limits_{i\in\mathbb{N}}$ $D_{2i}\in \mathrm{%
\limfunc{Der}}\wedge\mathcal{E}$ be a locally hamiltonian even derivation.
Consider the decomposition (according to the $\mathbb{Z}-$degree) $%
\Theta=\Theta_{(0)}+\Theta_{(\geq2)}$. Then, $D$ is a graded hamiltonian
vector field for $\Theta$ if and only if $\iota_{D_{0}}\Theta_{(0)}$ is an
exact graded form.
\end{lemma}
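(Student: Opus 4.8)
The plan is to translate both sides of the claimed equivalence into statements about the single graded $1$-form $\iota_D\Theta$, and then to isolate its bidegree components. First I would recall that, since $\Theta$ is symplectic, $d^{G}\Theta = 0$, so by the graded Cartan formula for the even derivation $D$ one has $\mathcal{L}_{D}^{G}\Theta = \iota_D d^{G}\Theta + d^{G}\iota_D\Theta = d^{G}\iota_D\Theta$. Hence $D$ being locally hamiltonian (our standing hypothesis) is exactly the statement that $\iota_D\Theta$ is $d^{G}$-closed, while $D$ being a graded hamiltonian vector field means precisely that $\iota_D\Theta = d^{G}f$ for some $f\in\wedge\mathcal{E}$, i.e. that $\iota_D\Theta$ is $d^{G}$-exact. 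Thus the lemma reduces to showing that the closed form $\iota_D\Theta$ is exact if and only if its component $\iota_{D_0}\Theta_{(0)}$ is.

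The second step is the bidegree bookkeeping. I would check that $d^{G}$ is of bidegree $(1,0)$ (it raises the form degree by one and leaves the $\mathbb{Z}$-degree $k$ untouched), and that insertion of a derivation of $\mathbb{Z}$-degree $2i$ is of bidegree $(-1,2i)$. Consequently $\iota_{D_{2i}}\Theta_{(2j)}$ has bidegree $(1,2i+2j)$, and collecting terms by the total $\mathbb{Z}$-degree $k=2i+2j$ yields the homogeneous decomposition $\iota_D\Theta = \sum_{j\geq 0}(\iota_D\Theta)_{(2j)}$, whose lowest piece is exactly $(\iota_D\Theta)_{(0)} = \iota_{D_0}\Theta_{(0)}$, since only $i=j=0$ contributes to $k=0$. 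Because $d^{G}$ preserves $k$, closedness and exactness can be tested degree by degree: each $(\iota_D\Theta)_{(2j)}$ is separately $d^{G}$-closed, and $\iota_D\Theta$ is exact if and only if every $(\iota_D\Theta)_{(2j)}$ is.

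With this in place both implications are short. For the forward direction, if $\iota_D\Theta = d^{G}f$ then, writing $f=\sum_j f_{(2j)}$ and projecting onto bidegree $(1,0)$, one gets $\iota_{D_0}\Theta_{(0)} = d^{G}f_{(0)}$, so it is exact. For the converse I would use Proposition \ref{cor-Ko} as the main engine: the components $(\iota_D\Theta)_{(2j)}$ with $2j>0$ are $d^{G}$-closed graded forms of strictly positive $\mathbb{Z}$-degree, hence automatically exact; the only possible obstruction to exactness of $\iota_D\Theta$ therefore lies in the $k=0$ component $\iota_{D_0}\Theta_{(0)}$, which is exact by hypothesis. Summing the primitives produces a global primitive for $\iota_D\Theta$, so $D$ is hamiltonian.

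I expect the only delicate point to be the bidegree accounting of the second step, namely verifying that $d^{G}$ is of bidegree $(1,0)$ and that each higher component genuinely meets the $k>0$ hypothesis of Proposition \ref{cor-Ko}, since the whole argument hinges on that proposition removing every obstruction except the classical one carried by $\iota_{D_0}\Theta_{(0)}$. Once the grading is pinned down, no further computation is required.
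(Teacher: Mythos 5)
Your proof is correct and is essentially the paper's own argument: the paper's proof consists of the single remark that the lemma is ``a straightforward computation thanks to Prop.~\ref{cor-Ko}'', and your bidegree bookkeeping (closedness of $\iota_{D}\Theta$ from the locally hamiltonian hypothesis, degree-by-degree decomposition, Proposition~\ref{cor-Ko} killing every component of degree $k>0$, leaving $\iota_{D_{0}}\Theta_{(0)}$ as the sole obstruction) is precisely that computation written out in full.
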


\begin{proof}
It is an straightforward computation thanks to Prop. \ref{cor-Ko}.
\end{proof}

This means that the modular class just depends on the zero degree term of
the modular vector field.

\section{The symplectic Berezinian volume element and the modular class}

Let $\Theta$ be an even symplectic form on a graded manifold $(M,\wedge 
\mathcal{E})$ of dimension $(2n,m)$. We know that there are three objects
associated to the even symplectic form (see \cite{Rot 90}): an usual
symplectic form, $\omega$, on the base manifold $M$; a non degenerate
symmetric bilinear form, $g$, on $E^{\ast}$ and a connection, $\nabla$, on $%
E $, compatible with $g$, i.e, $\nabla g = 0$.

Let $\omega^{n}$ be the symplectic volume element on $M$, and let $\mu_{g}$
the metric volume element on $E$.

Given $s\in\wedge\mathcal{E}$ of compact support, we can define 
\begin{equation*}
\int_{\xi}s:=\int_{M}(i_{\mu_{g}}s)\omega^{n},
\end{equation*}
where $i_{\mu_{g}}s$ denotes the total contraction of $\mu_{g}\in
\Gamma(\Lambda^{m}E^{\ast})$ with $s$.

Such a definition includes, in an implicit way, the definition of a
Berezinian volume element, $\xi $. (See \cite{Lei 80}, \cite{HR-MM 85} or 
\cite{Kos-Mon 01})

We are going to define a divergence operator associated to the even
symplectic form through a Berezinian volume element, ${\xi}$. Given a
derivation $D\in\mathrm{\limfunc{Der}}\wedge\mathcal{E}$, there is a unique
section, denoted by $\func{div}^{\xi}(D)\in\wedge\mathcal{E}$ such that 
\begin{equation*}
-\int_{\xi}D(s)=\int_{\xi}\func{div}^{\xi}(D)\wedge s,
\end{equation*}
for all $s\in\wedge\mathcal{E}$ of compact support.

This is, indeed, a divergence operator.

\begin{proposition}
\begin{equation*}
\limfunc{div}\nolimits^{\xi }(s\wedge D)=s\wedge {\func{div}^{\xi }}%
(D)+(-1)^{|D||s|}D,
\end{equation*}
\end{proposition}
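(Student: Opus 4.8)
The plan is to verify directly that the operator $\func{div}^{\xi}$ defined by the integration-by-parts formula satisfies the Leibniz-type identity (\ref{gooddiv}), and then to invoke the uniqueness built into that defining formula. Concretely, I would fix an arbitrary compactly supported $t\in\wedge\mathcal{E}$ and analyse $-\int_{\xi}(s\wedge D)(t)$, aiming to rewrite it as $\int_{\xi}\bigl(s\wedge\func{div}^{\xi}(D)+(-1)^{|D||s|}D(s)\bigr)\wedge t$. Since $\func{div}^{\xi}(s\wedge D)$ is by definition the unique section for which $-\int_{\xi}(s\wedge D)(t)=\int_{\xi}\func{div}^{\xi}(s\wedge D)\wedge t$ holds for every such $t$, this identification closes the argument. (I note in passing that the right-hand side of the statement should read $(-1)^{|D||s|}D(s)$ rather than $(-1)^{|D||s|}D$, in agreement with (\ref{gooddiv}); it is $D(s)$ that the computation produces.)

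The first step is to unwind the module structure on $\mathrm{\limfunc{Der}}\wedge\mathcal{E}$, namely $(s\wedge D)(t)=s\wedge D(t)$, so that the quantity to control is $-\int_{\xi}s\wedge D(t)$. The second step is to apply the graded Leibniz rule for the derivation $D$ to the product $s\wedge t$, that is $D(s\wedge t)=D(s)\wedge t+(-1)^{|D||s|}s\wedge D(t)$, and solve it for $s\wedge D(t)$. This replaces the integrand by a combination of $D(s\wedge t)$ and $D(s)\wedge t$, each carrying a factor $(-1)^{|D||s|}$.

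The third step is to feed the term containing $D(s\wedge t)$ back into the defining relation for $\func{div}^{\xi}(D)$: since $s\wedge t$ is again of compact support, $-\int_{\xi}D(s\wedge t)=\int_{\xi}\func{div}^{\xi}(D)\wedge s\wedge t$. The final manipulation is to move $\func{div}^{\xi}(D)$ across $s$ inside this integral; because a divergence operator is even, $\func{div}^{\xi}(D)$ has the same parity as $D$, so the graded commutativity of the wedge product supplies a factor $(-1)^{|D||s|}$ that cancels the one already present, leaving the clean term $\int_{\xi}s\wedge\func{div}^{\xi}(D)\wedge t$.

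The computation itself is routine; the only delicate point is the bookkeeping of Koszul signs, and in particular using that the evenness of $\func{div}$ forces $|\func{div}^{\xi}(D)|=|D|$, which is precisely what makes the two stray signs cancel in the third step. Once the integrand has been assembled as $\bigl(s\wedge\func{div}^{\xi}(D)+(-1)^{|D||s|}D(s)\bigr)\wedge t$, the uniqueness in the defining relation yields the asserted identity.
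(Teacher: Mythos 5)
Your proposal is correct and takes essentially the same route as the paper's own proof: test $\limfunc{div}^{\xi}(s\wedge D)$ against an arbitrary compactly supported section, apply the graded Leibniz rule for $D$ to the product, feed the resulting $D(s\wedge t)$ term back into the defining relation for $\limfunc{div}^{\xi}(D)$, move $\limfunc{div}^{\xi}(D)$ past $s$ using evenness of the divergence operator, and conclude by uniqueness --- and you correctly note that the statement's right-hand side should end in $(-1)^{|D||s|}D(s)$, which is what both computations actually produce. The only cosmetic difference is that the paper momentarily rewrites $\int_{\xi}$ as $\int_{M}i_{\mu_{g}}(\cdot)\,\omega^{n}$ in the middle of the chain, a step your argument harmlessly bypasses.
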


\begin{proof}
Just a matter of computation: 
\begin{align*}
\int_{\xi }\limfunc{div}\nolimits^{\xi }(s\wedge D)\wedge \overline{s}&
=-\int_{\xi }s\wedge D(\overline{s}) \\
& =-\int_{M}i_{\mu _{g}}(s\wedge D(\overline{s}))\omega ^{n} \\
& =-(-1)^{|D||s|}\int_{M}i_{\mu _{g}}(D(s\wedge \overline{s}))\omega
^{n}+(-1)^{|D||s|}\int_{M}i_{\mu _{g}}(D(s)\wedge \overline{s})\omega ^{n} \\
& =-(-1)^{|D||s|}\int_{\xi }D(s\wedge \overline{s})+(-1)^{|D||s|}\int_{\xi
}D(s)\wedge \overline{s} \\
& =(-1)^{|D||s|}\int_{\xi }\limfunc{div}\nolimits^{\xi }(D)\wedge s\wedge 
\overline{s}+(-1)^{|D||s|}\int_{\xi }D(s)\wedge \overline{s} \\
& =\int_{\xi }(s\wedge \limfunc{div}\nolimits^{\xi
}(D)+(-1)^{|D||s|}D(s))\wedge \overline{s}.
\end{align*}
\end{proof}

Now, we would like to know what happens when we change the section of the
Berezinian sheaf; for this, we recall that the Berezinian module is a right $%
\wedge \mathcal{E}$-module of rank $1$ (see \cite{HR-MM 85}). So, given a
Berezinian volume element $\xi $, any other Berezinian volume element is of
the kind $\xi .\bar{s}$ for an invertible even element, $\bar{s}\in \wedge 
\mathcal{E}$.

\begin{proposition}
If $\bar{s}$ is of compact support, then $\limfunc{div}\nolimits^{\xi \bar{s}%
}=\limfunc{div}\nolimits^{\xi }+\limfunc{d}\nolimits^{G}\log \bar{s}.$
\end{proposition}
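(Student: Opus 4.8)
The plan is to test the claimed operator identity against an arbitrary derivation $D\in\limfunc{Der}\wedge\mathcal{E}$, so that it reduces to the pointwise statement $\limfunc{div}\nolimits^{\xi\bar s}(D)=\limfunc{div}\nolimits^{\xi}(D)+\langle D;\limfunc{d}\nolimits^{G}\log\bar s\rangle=\limfunc{div}\nolimits^{\xi}(D)+D(\log\bar s)$. To reach the left-hand side I would start from the defining variational identity of $\limfunc{div}\nolimits^{\xi\bar s}$ and rewrite every integral $\int_{\xi\bar s}$ in terms of $\int_{\xi}$. The right $\wedge\mathcal{E}$-module structure on the Berezinian sheaf gives exactly the transformation rule $\int_{\xi\bar s}t=\int_{\xi}\bar s\wedge t$ for compactly supported $t$; establishing (or recalling) this rule is the conceptual heart of the argument, and the place where the compact-support hypothesis on $\bar s$ is used to guarantee that all integrals converge and that the forthcoming integration by parts carries no boundary contribution.

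Granting this, I would compute as follows. For compactly supported $t$,
\[
-\int_{\xi\bar s}D(t)=-\int_{\xi}\bar s\wedge D(t).
\]
Since $\bar s$ is even, the graded Leibniz rule for the derivation $D$ gives $\bar s\wedge D(t)=D(\bar s\wedge t)-D(\bar s)\wedge t$, so the integral splits into two pieces. To the first I apply the defining identity of $\limfunc{div}\nolimits^{\xi}$ with the compactly supported argument $u=\bar s\wedge t$ (note that $u$ is compactly supported because $t$ is, whatever the support of $\bar s$), obtaining $-\int_{\xi}D(\bar s\wedge t)=\int_{\xi}\limfunc{div}\nolimits^{\xi}(D)\wedge\bar s\wedge t$. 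For the second I use the chain rule for even invertible elements, $D(\bar s)=\bar s\,D(\log\bar s)$ --- equivalently $\limfunc{d}\nolimits^{G}\log\bar s=\bar s^{-1}\limfunc{d}\nolimits^{G}\bar s$, which one may simply take as the definition of $\log\bar s$ through its body-plus-nilpotent decomposition. Collecting terms and using that $\bar s$ is even, hence central, yields
\[
-\int_{\xi\bar s}D(t)=\int_{\xi}\bar s\wedge\bigl(\limfunc{div}\nolimits^{\xi}(D)+D(\log\bar s)\bigr)\wedge t.
\]

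Finally I would compare this with the right-hand side of the defining identity for $\limfunc{div}\nolimits^{\xi\bar s}$, namely $\int_{\xi\bar s}\limfunc{div}\nolimits^{\xi\bar s}(D)\wedge t=\int_{\xi}\bar s\wedge\limfunc{div}\nolimits^{\xi\bar s}(D)\wedge t$. Both sides are of the form $\int_{\xi}\bar s\wedge(\,\cdot\,)\wedge t$ for every compactly supported $t$, so the non-degeneracy of the pairing $(a,t)\mapsto\int_{\xi}a\wedge t$ --- the very property that makes the divergence well defined --- forces $\bar s\wedge\limfunc{div}\nolimits^{\xi\bar s}(D)=\bar s\wedge(\limfunc{div}\nolimits^{\xi}(D)+D(\log\bar s))$. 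Multiplying by the inverse of the even element $\bar s$ cancels it and gives the asserted equality for each $D$, hence the identity of operators $\limfunc{div}\nolimits^{\xi\bar s}=\limfunc{div}\nolimits^{\xi}+\limfunc{d}\nolimits^{G}\log\bar s$.

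I expect the only genuine obstacles to be bookkeeping rather than deep: pinning down the transformation rule $\int_{\xi\bar s}=\int_{\xi}\bar s\wedge(\,\cdot\,)$ from the right-module description of the Berezinian, and making precise the meaning of $\log\bar s$ (and the corresponding chain rule) for an even invertible section whose body need not be positive. Both are standard once the Berezinian integration theory of \cite{HR-MM 85} is invoked; everything else is the graded Leibniz rule, the defining property of $\limfunc{div}\nolimits^{\xi}$, and the cancellation of the invertible factor $\bar s$.
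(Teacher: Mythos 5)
Your proposal is correct and follows essentially the same route as the paper: rewrite $\int_{\xi\bar s}$ via the right-module rule $\int_{\xi\bar s}\_=\int_{\xi}\bar s\wedge\_$, split $\bar s\wedge D(t)=D(\bar s\wedge t)-D(\bar s)\wedge t$ by the Leibniz rule, apply the defining property of $\limfunc{div}\nolimits^{\xi}$ to the first term, identify $\bar s^{-1}\wedge D(\bar s)=D(\log\bar s)$, and cancel the invertible even factor $\bar s$. The only (harmless) difference is presentational: you invoke the non-degeneracy of the pairing explicitly where the paper simply equates its two displayed expressions, and the paper unwinds $\int_{\xi}$ through $i_{\mu_g}$ and $\omega^{n}$ to justify the Leibniz step, which you perform directly at the level of $\int_{\xi}$.
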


\begin{proof}
From the definition of Berezinian, 
\begin{equation*}
\int\nolimits_{\xi \bar{s}}\_=\int\nolimits_{\xi }\bar{s}\wedge \_.
\end{equation*}
Now we have, for any $s\in \wedge \mathcal{E},$%
\begin{equation}
\int\nolimits_{\xi \bar{s}}D(s)=-\int\nolimits_{\xi \bar{s}}\limfunc{div}%
\nolimits^{\xi \bar{s}}(D)\wedge s=-\int\nolimits_{\xi }\bar{s}\wedge 
\limfunc{div}\nolimits^{\xi \bar{s}}(D)\wedge s.  \label{eq4.1}
\end{equation}
On the other hand, 
\begin{eqnarray}
\int\nolimits_{\xi \bar{s}}D(s) &=&\int\nolimits_{\xi }\bar{s}\wedge
D(s)=\int_{M}i_{\mu _{g}}(\overline{s}\wedge D(s))\omega ^{n}=  \label{eq4.2}
\\
&=&\int_{M}i_{\mu _{g}}(D(\overline{s}\wedge s))\omega ^{n}-\int_{M}i_{\mu
_{g}}(D(\overline{s})\wedge s)\omega ^{n}=  \notag \\
&=&\int\nolimits_{\xi }D(\bar{s}\wedge s)-\int\nolimits_{\xi }D(\bar{s}%
)\wedge s=  \notag \\
&=&-\int\nolimits_{\xi }\limfunc{div}\nolimits^{\xi }(D)\wedge \bar{s}\wedge
s-\int\nolimits_{\xi }\overline{s}\wedge \overline{s}^{-1}\wedge D(\bar{s}%
)\wedge s=  \notag \\
&=&-\int\nolimits_{\xi }\bar{s}\wedge \limfunc{div}\nolimits^{\xi }(D)\wedge
s-\int\nolimits_{\xi }\overline{s}\wedge \overline{s}^{-1}\wedge D(\bar{s}%
)\wedge s.  \notag
\end{eqnarray}
Equating (\ref{eq4.1}) and (\ref{eq4.2}), we obtain 
\begin{eqnarray*}
\limfunc{div}\nolimits^{\xi \bar{s}}(D) &=&\limfunc{div}\nolimits^{\xi }(D)+%
\overline{s}^{-1}\wedge D(\bar{s})= \\
&=&\limfunc{div}\nolimits^{\xi }(D)+D(\log \bar{s})= \\
&=&\limfunc{div}\nolimits^{\xi }(D)+\left\langle D;\limfunc{d}%
\nolimits^{G}\log \bar{s}\right\rangle ,
\end{eqnarray*}
and, from here, the statement.
\end{proof}

This enables us to give the following definition.

\begin{definition}
The \emph{modular class} of an even Poisson bracket is the class of any
modular vector field in the quotient $\mathrm{\mathrm{\limfunc{Der}}}\wedge 
\mathrm{\mathcal{E}}/\mathrm{\limfunc{Ham}}(\pi )$.
\end{definition}

Let us note how the notion of symplectic Berezinian is related to that of
canonical Berzinian. Given the volume form $\omega ^{n}$ on $M$ and the
metric volume $\mu _{g}$, as they are forms of maximal degree on $M$, there
must exist a function $f$ such that $\omega ^{n}=\limfunc{e}\nolimits^{f}\mu
_{g}$. If $s_{(\max )}$ denotes the maximal degree part of the section $s$,
also there must exist a $h$ with $s_{(\max )}=h\mu _{g}$, and we have that
the canonical Berzinian gives 
\begin{equation*}
\int\nolimits_{can}s=\int\nolimits_{M}s_{(\max )};
\end{equation*}
on the other hand, the symplectic Berezinian reads 
\begin{eqnarray*}
\int\nolimits_{symp}s &=&\int_{M}(i_{\mu _{g}}s)_{(\max )}\omega
^{n}=\int_{M}i_{\mu _{g}}(h\mu _{g})\limfunc{e}\nolimits^{f}\mu _{g} \\
&=&\int_{M}h\limfunc{e}\nolimits^{f}\mu _{g}=\int\nolimits_{M}\limfunc{e}%
\nolimits^{f}s_{(\max )},
\end{eqnarray*}
so $\limfunc{e}\nolimits^{f}$ is the section that passes from $%
\int\nolimits_{symp}$ to $\int\nolimits_{can}$. Then, from Proposition $4$,
the associated divergences are related through 
\begin{equation*}
\limfunc{div}\nolimits^{symp}=\limfunc{div}\nolimits^{can}+\limfunc{d}%
\nolimits^{G}f.
\end{equation*}

In the case of $(M,\omega ,g)$ a K\"{a}hler manifold, in which $f$ is a
constant function, $\limfunc{div}\nolimits^{symp}=\limfunc{div}%
\nolimits^{can}$.

The basic derivations in this setting are of the type $i_{\chi }$, for $\chi
\in \Gamma (E^{\ast })$, and $\nabla _{X}$, for a vector field $X$, and
where we can use the linear connection $\nabla $ induced by the even
symplectic form. Let us compute their divergences.

\begin{lemma}
Let $\nabla$ be a connection compatible with $g$, then, 
\begin{equation*}
\func{div}^{\xi}(i_{\chi})=0, \qquad\func{div}^{\xi}(\nabla_{X})=\func{div}%
^{\omega^{n}}(X).
\end{equation*}
\end{lemma}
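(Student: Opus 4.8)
The plan is to read off both divergences directly from the defining relation $-\int_{\xi}D(s)=\int_{\xi}\func{div}^{\xi}(D)\wedge s$, together with the observation that the integral $\int_{\xi}s=\int_{M}(i_{\mu_{g}}s)\,\omega^{n}$ sees only the top (degree-$m$) component of $s$, since $\mu_{g}\in\Gamma(\Lambda^{m}E^{\ast})$ annihilates every term of lower exterior degree. In both cases the computation will produce an explicit compactly supported section against which the divergence is tested, and uniqueness in the definition of $\func{div}^{\xi}$ will close the argument.

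For the first identity I would use that $i_{\chi}$ lowers the exterior degree by one. Hence for any $s\in\wedge\mathcal{E}$ the degree-$m$ part of $i_{\chi}(s)$ is $i_{\chi}(s_{(m+1)})$, which vanishes because the fibre rank is $m$ and so $\Lambda^{m+1}E=0$. Consequently $i_{\mu_{g}}(i_{\chi}(s))=0$ and $\int_{\xi}i_{\chi}(s)=0$ for every compactly supported $s$. Comparing with $\int_{\xi}\func{div}^{\xi}(i_{\chi})\wedge s$ forces $\func{div}^{\xi}(i_{\chi})=0$ (an odd element, consistent with $i_{\chi}$ being odd).

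For the second identity the crucial input is the compatibility $\nabla g=0$, which makes the metric volume $\mu_{g}$ parallel, $\nabla_{X}\mu_{g}=0$. Since $\nabla_{X}$ preserves the $\mathbb{Z}$-degree, one has $(\nabla_{X}s)_{(m)}=\nabla_{X}(s_{(m)})$, and parallelism of $\mu_{g}$ together with compatibility of the induced connections on $\Lambda^{m}E$ and $\Lambda^{m}E^{\ast}$ with the duality pairing lets me pass the connection through the total contraction: $i_{\mu_{g}}(\nabla_{X}s)=X(i_{\mu_{g}}s)$, the ordinary directional derivative of the base function $i_{\mu_{g}}s$. Thus $\int_{\xi}\nabla_{X}(s)=\int_{M}X(i_{\mu_{g}}s)\,\omega^{n}$. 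Classical integration by parts on $(M,\omega^{n})$, namely $\int_{M}X(\phi)\,\omega^{n}=-\int_{M}\phi\,\func{div}^{\omega^{n}}(X)\,\omega^{n}$ for compactly supported $\phi$ (a consequence of Stokes applied to $\mathcal{L}_{X}(\phi\,\omega^{n})$), then gives $\int_{\xi}\nabla_{X}(s)=-\int_{M}(i_{\mu_{g}}s)\,\func{div}^{\omega^{n}}(X)\,\omega^{n}=-\int_{\xi}\func{div}^{\omega^{n}}(X)\wedge s$, the last equality holding because $\func{div}^{\omega^{n}}(X)$ is a degree-zero element and factors out of the total contraction. Uniqueness of the divergence yields $\func{div}^{\xi}(\nabla_{X})=\func{div}^{\omega^{n}}(X)$.

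The step I expect to be the main obstacle is the identity $i_{\mu_{g}}(\nabla_{X}s)=X(i_{\mu_{g}}s)$: one must verify that the connection induced on the exterior powers is compatible with the pairing between $\Lambda^{m}E$ and $\Lambda^{m}E^{\ast}$, and that $\nabla g=0$ genuinely forces $\nabla\mu_{g}=0$, so that no connection term survives and only the base directional derivative remains. Once this parallelism is secured, the graded problem collapses to the standard divergence identity on $(M,\omega^{n})$ and the rest is routine.
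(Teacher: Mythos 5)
Your proposal is correct and follows essentially the same route as the paper: the first identity comes from the fact that $i_{\chi}$ lowers the exterior degree, so $i_{\mu_{g}}(i_{\chi}s)=0$ and the defining relation forces the divergence to vanish; the second comes from passing $\nabla_{X}$ through the contraction via the Leibniz rule and $\nabla_{X}\mu_{g}=0$ (compatibility with $g$), then reducing to the classical identity $\int_{M}X(\phi)\,\omega^{n}=-\int_{M}\phi\operatorname{div}^{\omega^{n}}(X)\,\omega^{n}$, which the paper obtains by the same Cartan-formula-plus-Stokes argument you cite as classical integration by parts. The step you flagged as the main obstacle, $i_{\mu_{g}}(\nabla_{X}s)=X(i_{\mu_{g}}s)$, is exactly the first line of the paper's computation, with the $i_{\nabla_{X}\mu_{g}}s$ term killed by the hypothesis $\nabla g=0$, just as you argue.
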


\begin{proof}
Indeed, $i_{\chi}s$ is a section of degree $<m = \func{rk}(E)$, then $%
i_{\mu_{g}}i_{\chi}s=0$ for any $s$. For the other basic derivations, 
\begin{align*}
i_{\mu_{g}}(\nabla_{X}s)\omega^{n} &
=X(i_{\mu_{g}}s)\omega^{n}-(i_{\nabla_{X}\mu_{g}}s)\omega^{n}= \\
& =\mathcal{L}_{X}((i_{\mu_{g}}s)\omega^{n})-(i_{\mu_{g}}s)\mathcal{L}%
_{X}\omega^{n}= \\
& =\limfunc{d}i_{X}((i_{\mu_{g}}s)\omega^{n})+i_{X}\limfunc{d}%
((i_{\mu_{g}}s)\omega^{n})-(i_{\mu_{g}}s)\mathcal{L}_{X}\omega^{n}.
\end{align*}
Now, the first term $\limfunc{d}i_{X}((i_{\mu_{g}}s)\omega^{n})$ does not
contribute in the integral because it is an exact term. The second, $i_{X}%
\limfunc{d}((i_{\mu_{g}}s)\omega^{n})$, is equal to zero because $%
(i_{\mu_{g}}s)\omega^{n}$ is a top degree differential form on $M$. The
third term gives $(i_{\mu_{g}}s)\limfunc{div}\nolimits^{\omega^{n}}(X)$
because $\mathcal{L}_{X}\omega^{n}=\limfunc{div}\nolimits^{\omega^{n}}(X)%
\omega^{n}$. Finally, note that $\nabla_{X}\mu_{g}$ vanishes by hypothesis.

Therefore, $-\int\nolimits_{\xi}\nabla_{X}s=\int\nolimits_{\xi}\limfunc{div}%
\nolimits^{\omega^{n}}(X)s$.
\end{proof}

\begin{theorem}
Any even symplectic form on a graded manifold $(M,\wedge\mathcal{E})$ is
unimodular.
\end{theorem}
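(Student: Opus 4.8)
The plan is to exhibit the modular vector field as a globally hamiltonian derivation by reducing everything to its zero-degree component and then invoking the classical Liouville theorem. I would work throughout with the symplectic Berezinian $\xi$, so that the computations of the previous Lemma are at hand: $\func{div}^{\xi}(i_{\chi})=0$ for every $\chi\in\Gamma(E^{\ast})$, and $\func{div}^{\xi}(\nabla_{X})=\func{div}^{\omega^{n}}(X)$ for every vector field $X$ on $M$, where $\nabla$ is the connection compatible with $g$ furnished by $\Theta$. Since by the earlier discussion the modular class does not depend on the chosen section of the Berezinian sheaf, this particular choice costs nothing and makes the basic divergences explicit.

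First I would record, using Lemma \ref{ocho} together with the remark following it, that the modular vector field $Z^{M}$ is an even locally hamiltonian derivation, and that it is globally hamiltonian --- hence represents the zero modular class --- exactly when $\iota_{Z^{M}_{0}}\Theta_{(0)}$ is an exact graded form on $M$. Thus it suffices to control the zero-degree piece $Z^{M}_{0}$ alone; neither the higher terms $\Theta_{(\geq 2)}$ nor the higher components of $Z^{M}$ ever enter the obstruction.

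Next I would identify $Z^{M}_{0}$ explicitly. Using the splitting of $\limfunc{Der}\wedge\mathcal{E}$ into the vertical part generated by the $i_{\chi}$ and the horizontal part generated by the $\nabla_{X}$, the hamiltonian derivation $D_{f}=[\![f,\_]\!]$ attached to a base function $f$ has, in degree zero, precisely the horizontal piece $\nabla_{X_{f}}$, where $X_{f}$ is the ordinary $\omega$-hamiltonian vector field of $f$ on $M$. Because $\func{div}^{\xi}(i_{\chi})=0$, the vertical contributions drop out of the divergence, and the previous Lemma then gives $Z^{M}_{0}(f)=\func{div}^{\xi}(\nabla_{X_{f}})=\func{div}^{\omega^{n}}(X_{f})$. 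It is the compatibility $\nabla g=0$ that guarantees the vertical terms contribute nothing and that this clean identification holds.

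Finally, I would invoke the classical unimodularity of $(M,\omega)$: since $\mathcal{L}_{X_{f}}\omega=0$ one has $\mathcal{L}_{X_{f}}\omega^{n}=0$, whence $\func{div}^{\omega^{n}}(X_{f})=0$ for every $f$. Therefore $Z^{M}_{0}=0$, the form $\iota_{Z^{M}_{0}}\Theta_{(0)}$ is trivially exact, and Lemma \ref{ocho} yields that $Z^{M}$ is globally hamiltonian, i.e. the modular class vanishes. I expect the main obstacle to lie in the third step --- pinning down exactly which part of $D_{f}$ survives in degree zero, and confirming via $\nabla g=0$ that the $i_{\chi}$-type terms genuinely contribute nothing to $\func{div}^{\xi}$ --- rather than in the final Liouville argument, which is immediate.
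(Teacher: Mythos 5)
Your strategy is the paper's own --- Lemma \ref{ocho}, the divergences of the basic derivations $i_{\chi}$ and $\nabla_{X}$, and Liouville's theorem on the base --- but your concluding step contains a genuine logical gap. From $Z^{M}_{0}(f)=\func{div}^{\omega^{n}}(X_{f})=0$ for every \emph{base} function $f\in C^{\infty}(M)$ you infer ``therefore $Z^{M}_{0}=0$''. That inference is invalid: a degree-zero derivation of $\wedge\mathcal{E}$ is not determined by its action on $C^{\infty}(M)$. Vanishing on base functions kills only the horizontal part of $Z^{M}_{0}$; there can remain a vertical part $i_{N}$ with $N\in\func{End}\mathcal{E}$, since every such derivation annihilates all of $C^{\infty}(M)$ without being zero, and nothing in your argument excludes it (the paper never claims it vanishes). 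What rescues the conclusion is a point you never make: any graded $2$-form of bidegree $(2,0)$ pairs to zero against vertical derivations --- this is visible in the explicit formulas for $\Theta_{(0)}^{\omega,g,\nabla}$, or by pure degree counting, since $\left\langle i_{\chi},D;\Theta_{(0)}\right\rangle$ would have negative degree. Hence $\iota_{i_{N}}\Theta_{(0)}^{\omega,g,\nabla}=0$ and Lemma \ref{ocho} still applies. This is exactly how the paper finishes: $Z^{M}=i_{N}+$ higher degree terms, and then $\iota_{Z^{M}_{0}}\Theta_{(0)}^{\omega,g,\nabla}=\iota_{i_{N}}\Theta_{(0)}^{\omega,g,\nabla}=0$.

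A second omission compounds this: you assert that the degree-zero part of $D_{f}=[\![f,\_]\!]$ is ``precisely'' $\nabla_{X_{f}}$ for an arbitrary even symplectic form $\Theta$. That identity is a property of the Rothstein model form $\Theta_{\omega,g,\nabla}$, for which the cross pairings $\left\langle \nabla_{X},i_{\chi};\Theta\right\rangle$ vanish identically. For a general even symplectic form these pairings need not vanish, and the degree-zero part of $D_{f}$ can acquire a vertical term $i_{N_{f}}$ (determined, via the nondegeneracy of $g$, by $\left\langle \nabla_{X_{f}},i_{\psi};\Theta_{(2)}\right\rangle$); since $\func{div}^{\xi}(i_{N_{f}})=\pm\func{tr}(N_{f})$ is generally nonzero, your computation of $Z^{M}_{0}(f)$ would then break down. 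The paper forestalls this by first invoking Rothstein's theorem, $\Theta=\varphi^{\ast}(\Theta_{\omega,g,\nabla})$ with $\nabla g=0$, and proving unimodularity for the model form only; your proof needs this reduction (or a separate argument controlling the vertical piece of $D_{f}$) to be complete.
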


\begin{proof}
Recall Rothstein Theorem: $\Theta=\varphi^{\ast}(\Theta_{\omega,g,\nabla})$
for an automorphism $\varphi$ of $\wedge\mathcal{E}$ and where $\nabla$ is
compatible with $g$; thus, it is clear that if we prove that $\Theta
_{\omega,g,\nabla}$ is unimodular, $\Theta$ will also be. $\Theta
_{\omega,g,\nabla}$ is given by 
\begin{align*}
\left\langle \nabla_{X},\nabla_{Y};\Theta_{\omega,g,\nabla}\right\rangle &
=\omega(X,Y)+\frac{1}{2}R(X,Y,\_,\_) \\
\left\langle \nabla_{X},i_{\chi};\Theta_{\omega,g,\nabla}\right\rangle & =0
\\
\left\langle i_{\chi},i_{\psi};\Theta_{\omega,g,\nabla}\right\rangle &
=g(\chi,\psi),
\end{align*}
so that $(\Theta_{\omega,g,\nabla})_{(0)}$, which we shall denote $%
\Theta_{(0)}^{\omega,g,\nabla}$, is given by 
\begin{align*}
\left\langle \nabla_{X},\nabla_{Y};\Theta_{(0)}^{\omega,g,\nabla
}\right\rangle & =\omega(X,Y) \\
\left\langle \nabla_{X},i_{\chi};\Theta_{(0)}^{\omega,g,\nabla}\right\rangle
& =0=\left\langle i_{\chi},i_{\psi};\Theta_{(0)}^{\omega,g,\nabla
}\right\rangle .
\end{align*}

The graded Hamiltonian vector field associated to $f\in C^{\infty}(M)$
through the symplectic form $\Theta_{\omega,g,\nabla}$ is given by 
\begin{equation*}
D_{f} = \nabla_{X_{f}}+ h.d.t.
\end{equation*}

We have Lemma $2$, telling us that $D$ is a graded Hamiltonian vector field
if and only if $\iota _{D_{0}}\Theta _{(0)}$ is an exact graded form. On the
other hand, we know that 
\begin{equation}
\pi _{(0)}(Z^{M}(f))=\pi _{(0)}(\limfunc{div}(D_{f}))=\limfunc{div}(\nabla
_{X_{f}})=0.
\end{equation}
Therefore $Z^{M}=i_{N}+\text{\emph{higher degree terms,}}$ where $N\in 
\limfunc{End}\mathcal{E}$. But then, 
\begin{equation*}
\iota _{Z_{0}^{M}}\Theta _{(0)}^{\omega ,g,\nabla }=\iota _{i_{N}}\Theta
_{(0)}^{\omega ,g,\nabla }=0.
\end{equation*}
\end{proof}

\section{Applications}

In this section, we intend to provide some ideas about the possible
applications of these results. In the classical case, the notions of
divergence and vanishing modular class, are intimately related to
conservation laws along the flow of fluids; in fact, to one of the basic
equations of fluid dynamics, the continuity equation. We do not intend here
to give a complete description of the equations of graded fluids, we will
content ourselves with a study of what the graded continuity equation must
be (this is the only basic equation of fluid dynamics which is directly
related to the conservation of volume by the Hamiltonian flow).

Let us consider more concretely the classical situation we want to extend to
the graded case.

Let $V\in\mathcal{X}(M)$ be a vector field describing a classical dynamical
system (for instance, think of the velocities field on a fluid), and let $%
\{\varphi_{t}\}_{t\in\mathbb{R}}$ be its flow. Associated to any function $%
f\in C^{\infty}(M)$ (which describes the density of some observable on the
system), we have the continuity equation 
\begin{equation}
\frac{\partial f}{\partial t}+\limfunc{div}(fV)=0  \label{cont}
\end{equation}
(here we allow the possibility of a time dependence in $f$). This equation,
expresses the conservation of the total magnitude associated to $f$: 
\begin{equation}
\frac{d}{dt}\int_{M}f\mu=0,  \label{cons}
\end{equation}
where $\mu$ is a volume form on $M$, usually the symplectic volume form
coming from the hamiltonian structure of the dynamical system.

What would be the graded analog of (\ref{cont})?. We can not mimic the
physical reasoning of the classical case, because in the graded one there is
no notion of volume form (understood as a maximal degree graded form), but
we can extend the geometrical interpretation. For this, let us note that (%
\ref{cont}) can be rewritten as 
\begin{equation}
(\frac{\partial}{\partial t}+\mathcal{L}_{V})(f\mu)=0.  \label{contLie}
\end{equation}
The continuity equation in its form (\ref{contLie}), allows one to interpret 
$f\mu$ as a density form on the fluid which is dinamically conserved along
the flow $\{\varphi_{t}\}_{t\in\mathbb{R}}$. Here $f$ can be a volume
density, a charge density, etc. Moreover, this equation and its geometrical
interpretation carry over to graded manifolds. Now, an ``observable
density'' will be a superfunction $\rho\in\wedge\mathcal{E}$. A graded
vector field is a $D\in\mathrm{\limfunc{Der}}\wedge\mathcal{E}$ and its
flow, in general, is two-parameter dependent (see \cite{Mon-San 93} for
details on superflows), $\{\Phi_{(t,s)}^{\ast}\}_{(t,s)\in\mathbb{R}^{1|1}}$%
, where $\Phi :\mathbb{R}^{1|1}\times(M,\wedge\mathcal{E)}%
\rightarrow(M,\wedge\mathcal{E)}$. Thus, if $(t,s)$ are the (global)
supercoordinates of $\mathbb{R}^{1|1}$, the graded analog of (\ref{contLie})
would be the expression of the conservation of $\rho$ along the flow of $D$: 
\begin{equation}
(\frac{\partial}{\partial t}+\frac{\partial}{\partial s}+\mathcal{L}%
_{D}^{G})(\rho)=0,  \label{contLiegrad}
\end{equation}
where we have taken $\frac{\partial}{\partial t}+\frac{\partial}{\partial s}$
as the ``integrating model'' for supervector fields flows (see \cite{Mon-San
93}). Also, $\rho$ can eventually depend upon $s,t$.

By using our results (Proposition $3$ and Theorem $6$), we can recast (\ref
{contLiegrad}) in a form similar to the classical one (\ref{cont}): 
\begin{align*}
(\frac{\partial}{\partial t}+\frac{\partial}{\partial s}+\mathcal{L}%
_{D}^{G})(\rho) & =(\frac{\partial}{\partial t}+\frac{\partial}{\partial s}%
)(\rho)+D(\rho)= \\
& =(\frac{\partial}{\partial t}+\frac{\partial}{\partial s})(\rho
)+(-1)^{\left| D\right| \left| \rho\right| }(\limfunc{div}%
\nolimits^{\xi}(\rho D)-\rho\wedge\limfunc{div}\nolimits^{\xi}(D))= \\
& =(\frac{\partial}{\partial t}+\frac{\partial}{\partial s})(\rho
)+(-1)^{\left| D\right| \left| \rho\right| }(\limfunc{div}%
\nolimits^{\xi}(\rho D)).
\end{align*}
Thus, the equation of continuity reads now 
\begin{equation*}
(\frac{\partial}{\partial t}+\frac{\partial}{\partial s})(\rho)+(-1)^{\left|
D\right| \left| \rho\right| }(\limfunc{div}\nolimits^{\xi}(\rho D))=0.
\end{equation*}
Indeed, though it is not evident, this equation is of the ``conservation of
mass'' type. We only have to take into account the properties of the
superflows which are analogues to those of the classical flow of vector
fields. Let us denote by $(U,\wedge\mathcal{E}|_{U})$ an open superdomain
and by $\Phi_{(t,s)}^{\ast}(U,\wedge\mathcal{E}|_{U})$ the superdomain
obtained from the action of the superflow of $D$. Then, if $\int$ denotes
the berezinian integral, 
\begin{align*}
(\frac{\partial}{\partial t}+\frac{\partial}{\partial s})\int_{\Phi
_{(t,s)}^{\ast}(U,\wedge\mathcal{E}|_{U})}\rho & =\int_{(U,\wedge \mathcal{E}%
|_{U})}(\frac{\partial}{\partial t}+\frac{\partial}{\partial s}%
)\Phi_{(t,s)}^{\ast}\rho= \\
& =\int_{(U,\wedge\mathcal{E}|_{U})}\Phi_{(t,s)}^{\ast}\left[ (\frac
{\partial}{\partial t}+\frac{\partial}{\partial s})\rho+\mathcal{L}%
_{D}^{G}\rho\right] = \\
& =\int_{\Phi_{(t,s)}^{\ast}(U,\wedge\mathcal{E}|_{U})}\left[ (\frac
{\partial}{\partial t}+\frac{\partial}{\partial s})\rho+\mathcal{L}%
_{D}^{G}\rho\right] ,
\end{align*}
and the continuity equation is equivalent to 
\begin{equation}
(\frac{\partial}{\partial t}+\frac{\partial}{\partial s})\int\rho=0,
\label{conserv}
\end{equation}
which is a conservation equation.

Note how this result embodies the classical one about conservation of mass
in a fluid (for definiteness, moving on $\mathbb{R}^{2}$ with its usual
symplectic and metric structure). It suffices to take $\rho(\overrightarrow
{x},t)=f(\overrightarrow{x},t)\mu$ (where $f$ is the density of the fluid
and $\mu$ is the symplectic volume form on $\mathbb{R}^{2})$, and $D=%
\mathcal{L}_{X}$ (where $X$ is the field of velocities) as a derivation on $(%
\mathbb{R}^{2},\Gamma(\Lambda T^{\ast}\mathbb{R}^{2}))$, and then, by the
definition of berezinian integral, (\ref{conserv}) leads to 
\begin{align*}
0 & =(\frac{\partial}{\partial t}+\frac{\partial}{\partial s})\rho +\mathcal{%
L}_{D}^{G}\rho= \\
& =\frac{\partial}{\partial t}(f\mu)+\mathcal{L}_{X}(f\mu)= \\
& =\frac{\partial f}{\partial t}\mu+\limfunc{div}(fX)\mu,
\end{align*}
that is, the classical equation 
\begin{equation*}
\frac{\partial f}{\partial t}+\limfunc{div}(fX)=0.
\end{equation*}

The advantage of the equation (\ref{conserv}), is that it allows to consider
all kinds of magnitudes expressibles as differential forms, in the spirit of
the generalization of classical mechanics proposed by Michor (see \cite{Mic
85}).

\textbf{Acknowlegements}. A previous version of this result was presented in
Colloque IHP, ``G\'{e}ometrie diff\'{e}rentielle et physique
math\'{e}matique'', Paris, June 2001. We thank Y. Kosmann-Schwarzbach, A.
Weinstein and G. Tuynman for suggesting us the possibility of a simpler
proof.

This work has been partially supported by the Spanish Ministerio de
Educaci\'{o}n y Cultura, Grant PB-97-1386.

\end{document}